\newtheorem{theorem}{Theorem}[section]
\newtheorem{lemma}[theorem]{Lemma}
\newtheorem{proposition}[theorem]{Proposition}
\newtheorem{corollary}[theorem]{Corollary}
\theoremstyle{definition}
\theoremstyle{remark}
\newtheorem{remark}[theorem]{Remark}
\numberwithin{equation}{section}
\newcommand{\eps}{\varepsilon}
\newcommand{\R}{\mathbb R}
\newcommand{\N}{\mathbb N}
\newcommand{\VV}{\mathscr{V}}
\newcommand{\ZZ}{\mathscr{Z}}
\newcommand{\HH}{\mathscr{H}}
\newcommand{\cL}{\mathcal L}
\newcommand{\identity}{\mathrm{Id}}
\newcommand{\be}{\begin{equation}}
\newcommand{\ee}{\end{equation}}
\newcommand{\tria}{\mathcal T}
\DeclareMathOperator{\vol}{vol}
\DeclareMathOperator{\ran}{ran}
\DeclareMathOperator{\clos}{clos}
\DeclareMathOperator{\supp}{supp}
\DeclareMathOperator{\Span}{span}
\title{A quadratic finite element wavelet Riesz basis}
\thanks{The first author has been supported by the Netherlands Organization for Scientific Research
(NWO) under contract. no. 613.001.216}
\date{\today}
\author{Nikolaos Rekatsinas and Rob Stevenson}
\address{
Korteweg-de Vries Institute for Mathematics,
University of Amsterdam,
P.O. Box 94248,
1090 GE Amsterdam, The Netherlands}
\email{n.rekatsinas@uva.nl, r.p.stevenson@uva.nl}
\subjclass[2010]{
65T60, 
65N30 
41A10, 
42C40
}
\keywords{Wavelets, finite elements, Riesz bases, biorthogonality, vanishing moments}
\begin{document}

\begin{abstract} 
In this paper, continuous piecewise quadratic finite element wavelets are constructed on general polygons in $\R^2$. The wavelets are stable in $H^s$ for $|s|<\frac{3}{2}$ and have two vanishing moments. Each wavelet is a linear combination of 11 or 13 nodal basis functions.
Numerically computed condition numbers for $s \in \{-1,0,1\}$ are provided for the unit square.
\end{abstract}

\maketitle

\section{Introduction}
It is well-known that, properly scaled, an infinite countable collection of wavelets can generate Riesz bases for a scale of Sobolev spaces.
Such wavelets can be applied for the numerical solution of PDEs and singular integral equations.
For suitable wavelets, the (infinite) stiffness matrix of such an operator equation is boundedly invertible, so that the residual of an approximation is equivalent to its error, meaning that this residual can be used as an a posteriori error estimator for driving an adaptive algorithm.
Using that wavelets have vanishing moments, even for singular integral equations the matrix is close to being sparse, and therefore its application can be efficiently approximated.
Furthermore, in any case for elliptic equations, any principal submatrix is uniformly well-conditioned allowing for an efficient iterative solution of the arising Galerkin systems.

As with most wavelet applications, it is important that the wavelets have local supports.  Other than with classical wavelet applications,  as data compression and signal analysis,
for solving operator equations the corresponding dual wavelets do not enter the computations, and their support sizes are irrelevant. This induces a lot of freedom in the construction of suitable wavelet bases.
For more information on the application of wavelets for the (adaptive) solution of operator equations, we refer to \cite{55,249.92,298} and the references cited there.

Traditionally, wavelets are constructed on the line or on the interval $[0,1]$.
Then the application of a tensor product construction yields wavelets on $\R^n$ or $[0,1]^n$.
For equipping more general domains in $\R^n$, or their boundaries,  with wavelet bases domain decomposition techniques have been developed (e.g. \cite{35.95,54.5,54.6}).

Another approach to treat non-product domains is to construct wavelets in finite element spaces w.r.t. a nested sequence of meshes.
This approach, to which also this paper is devoted, inherits the full flexibility from the finite element method concerning the shape of the domain.

To realize the Riesz basis property, one can rely on the theory of biorthogonal space decompositions (\cite{53}).
It starts with two multiresolution analyses $(V_j)_{j \geq 0}$, $(\tilde{V}_j)_{j \geq 0}$ on the given domain that both satisfy Jackson and Bernstein estimates, and for which $V_j$ and $\tilde{V}_j$ are relatively close in the sense that they satisfy inf-sup conditions, uniformly in $j$. Then for each $j$, one constructs the wavelets on `level' $j$ as a basis for the biorthogonal complement $V_j \cap \tilde{V}_{j-1}^{\perp_{L_2}}$ of $V_{j-1}$ in $V_j$.
The union over $j$ of such wavelets form, properly scaled, a Riesz basis for the Sobolev space with smoothness index $s$ for $s$ in an interval $(s_{\min},s_{\max}) \ni 0$ determined by the aforementioned Jackson and Bernstein estimates. With finite element wavelets, both multiresolution analyses are sequences of finite element spaces w.r.t. a common sequence of meshes.
Linear finite elements of this type, with $\tilde{V}_j=V_j$, were constructed in \cite{180,249.71,75.2,138.5}, and higher order ones, also with $\tilde{V}_j\neq V_j$, can be found in \cite{56,239.17}. With the exception of \cite{249.71}, the constructions in these references were restricted to two space dimensions.

An alternative possibility is to relax `true' biorthogonality w.r.t. $L_2$ to an approximate biorthogonality, for instance biorthogonality w.r.t. to a level-dependent, approximate $L_2$-scalar product.
Usually the resulting wavelet bases have smaller supports than those than span truly $L_2$-biorthogonal complements, but on the other hand typically their $s_{\min}$ is larger and sometimes positive. 
Linear finite element wavelets of this type were can be found in  \cite{306,249.70,191}, and quadratics ones in \cite{187}.

In \cite{243.86}, we applied an adaptive wavelet method for solving time-dependent parabolic PDEs in a simultaneous space-time variational formulation.
One of the arising spaces that had to be equipped with a wavelet Riesz basis is the intersection of the Bochner spaces $L_2((0,T);H_0^1(\Omega)) \cap H^1((0,T);H^{-1}(\Omega))$, where $\Omega \subset \R^2$ denotes the spatial domain. We equipped this space with a basis of tensor products of temporal wavelets and spatial wavelets. In order to obtain a Riesz basis for the aforementioned intersection space, the collection of temporal wavelets has to be, properly scaled, a Riesz basis for $L_2(0,T)$ and for $H^1(0,T)$, which wavelets are amply available, whereas the 
 collection $\Psi$ of spatial wavelets has to be, properly scaled, a Riesz basis for $H^1_0(\Omega)$ and for its dual $H^{-1}(\Omega)$.
 Moreover, in order to obtain a sufficient near-sparsity of the resulting stiffness matrix, we need these wavelets from $\Psi$ to have 2 vanishing moments, or more precisely, cancellation properties of order 2.
 Finally, since we wrote the PDE as a first order system least squares system, and a suitable wavelet basis for the flux variable is at least of order 2,  we need the wavelets from $\Psi$ to be of order 3, i.e. piecewise quadratics.

Piecewise quadratics wavelets in finite element spaces were constructed in \cite{56,187,239.17}. Those in \cite{187} have small supports, but are not stable in $H^s$ for $s \leq 0$ and do not have vanishing moments.
The quadratic wavelets in the other two references satisfy our needs, and have even 3 vanishing moments ($\tilde{V}_j=V_j$). Unfortunately the condition numbers of those from \cite{56} turned out to be very large. For that reason, in \cite{239.17} we applied the available freedom in the general construction from \cite{56} to arrive at wavelets that are much better conditioned. 
The price to be paid was an increased support size. The continuous piecewise quadratic finite element wavelets from \cite{239.17} are linear combinations of 87 nodal basis functions.
In the current work, we change $\tilde{V}_j$ into the space of continuous piecewise linears w.r.t. a dyadically refined mesh, and use the available freedom to construct piecewise quadratics wavelets
that satisfy all our needs, have condition numbers similar to those from \cite{239.17}, and are given as linear combinations of 11 or 13 nodal basis functions.
\medskip

This paper is organized as follows: In Sect.~\ref{Stheory} we review the general theory on biorthogonal space decompositions, and in Sect.~\ref{Sconstruction} we apply the general principles of the element-by-element construction of finite element wavelets to construct continuous piecewise quadratic wavelets in two space dimensions, with small supports and $2$ vanishing moments. We provide numerically computed condition numbers in $H^1$, $L_2$ and $H^{-1}$-norms.
\medskip

We will use the following notations.
By $C \lesssim D$ we will mean that $C$ can be bounded by a multiple of $D$, independently of parameters which C and D may depend on. Obviously, $C \gtrsim D$ is defined as $D \lesssim C$, and $C\eqsim D$ as $C\lesssim D$ and $C \gtrsim D$.

For normed linear spaces $\mathscr{A}$ and $\mathscr{B}$, for convenience in this paper always over $\R$,  $\cL(\mathscr{A},\mathscr{B})$ will denote the space of bounded linear mappings $\mathscr{A} \rightarrow \mathscr{B}$ endowed with the operator norm $\|\cdot\|_{\cL(\mathscr{A},\mathscr{B})}$.


For a countable set $\vee$, the norm and scalar product on $\ell_2(\vee)$ will be denoted as $\|\cdot\|$ and $\langle \,,\,\rangle$, respectively.
For real square matrices $A$ and $B$ of the same size, we write $A \geq B$ when for all real vectors $x$, $\langle A x,x \rangle \geq \langle B x,x \rangle$.

A countable collection of functions $\Sigma$ will be formally viewed as a column vector. Then for a sequence of scalars ${\bf c}=(c_\sigma)_{\sigma \in \Sigma}$, we set ${\bf c}^\top \Sigma:=\sum_{\sigma \in \Sigma} c_\sigma \sigma$.
For countable collections of functions $\Sigma$ and  $\Phi$ in a Hilbert space $\HH$, we define the (formal) matrix $\langle \Sigma,\Phi\rangle_\HH:=[\langle \sigma,\phi\rangle_\HH]_{\sigma \in \Sigma,\phi \in \Phi}$.

We use the symbol $\N_0$ to denote $\{0,1,\cdots\}$.

\section{Theory on biorthogonal wavelet bases} \label{Stheory}
Let $\VV, \tilde{\VV}, \HH$ be separable Hilbert spaces with $\VV, \tilde{\VV} \hookrightarrow \HH$ with dense embedding.
Identifying $\HH$ with its dual, we obtain the Gelfand triples $\VV \hookrightarrow \HH \hookrightarrow \VV'$ and $\tilde{\VV} \hookrightarrow \HH \hookrightarrow \tilde{\VV}'$ with dense embeddings.
For $s \in [-1,1]$, we set the interpolation spaces $\VV^s:=[\VV',\VV]_{\frac12 s+\frac12}$ and $\tilde{\VV}^s:=[\tilde{\VV}',\tilde{\VV}]_{\frac12 s+\frac12}$.
The following theorem is a special case of an even more general result proven in \cite{53}.

\begin{theorem}[Biorthogonal space decompositions, \cite{56}] \label{main}
Consider two \emph{multiresolution analyses}
\begin{align*}
V_0 \subset V_1 \subset \cdots \subset \HH, \text{ with }
  \clos_{\HH}(\cup_{j \geq 0}V_j) = \HH,\\
  \tilde{V}_0 \subset \tilde{V}_1 \subset \cdots \subset \HH, \text{ with }
 \clos_{\HH}(\cup_{j \geq 0}\tilde{V}_j) = \HH. 
 \end{align*}
 Suppose that for $j \geq 0$ there exist uniformly bounded biorthogonal projectors $Q_j \in \cL(\HH,\HH)$ such that
 \be \label{stab}
 \ran Q_j = V_j,\quad \ran (I-Q_j)=\tilde{V}_j^{\perp_{\HH}},
\ee
  and that, for some $\rho,\,\tilde{\rho}>1$,
 \be \label{Jackson}
 \begin{split}
 \inf_{v_j \in V_j} \|v-v_j\|_{\HH} & \lesssim \rho^{-j} \|v\|_{\VV} \quad(v \in \VV),\\
 \inf_{\tilde{v}_j \in \tilde{V}_j} \|\tilde{v}-\tilde{v}_j\|_{\HH} &\lesssim \tilde{\rho}^{-j} \|\tilde{v}\|_{\tilde{\VV}} \quad(\tilde{v} \in \tilde{\VV}),
 \end{split}
 \ee
 and
 \be \label{Bernstein}
\|v_j\|_{\VV} \lesssim \rho^j \|v_j\|_{\HH}\quad(v_j \in V_j),\quad
\|\tilde{v}_j\|_{\tilde{\VV}} \lesssim \tilde{\rho}^j \|\tilde{v}_j\|_{\HH}\quad(\tilde{v} \in \tilde{V}_j).
\ee
Then,  with $Q_{-1}:=0$, for every $s \in (-1,1)$ it holds that 
$$
 \|v\|_{\VV^s}^2 \eqsim \displaystyle \sum_{j=0}^{\infty}
 \rho^{2js} \|(Q_{j}-Q_{j-1})v\|_{\HH}^2 \quad  (v \in \VV^s).
 $$
  \end{theorem}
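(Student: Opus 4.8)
The plan is to establish the equivalence separately for $s=0$, for $s\in(0,1)$, and for $s\in(-1,0)$, all resting on a single almost-orthogonality estimate. Writing $D_j:=Q_j-Q_{j-1}$, I would first record the algebra of the projectors. From $\ran Q_k=V_k\subseteq V_j$ for $k\le j$, and $\ker Q_j=\tilde{V}_j^{\perp_{\HH}}\supseteq\tilde{V}_k^{\perp_{\HH}}$ for $k\ge j$, one obtains $Q_jQ_k=Q_{\min(j,k)}$, hence $D_jD_k=\delta_{jk}D_j$; the density of $\cup_j V_j$ together with the uniform boundedness of the $Q_j$ gives $Q_j\to I$ strongly, so that $v=\sum_{j\ge0}D_jv$ in $\HH$. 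The adjoints $Q_j^*$ are the biorthogonal projectors of the dual analysis ($\ran Q_j^*=\tilde{V}_j$, $\ker Q_j^*=V_j^{\perp_{\HH}}$), so the hypotheses are symmetric under interchanging the two analyses and $D_j\leftrightarrow D_j^*$. A short computation then yields the $\HH$-orthogonalities $\ran D_j\perp_{\HH}\tilde{V}_{j-1}$ and $\ran D_j^*\perp_{\HH} V_{j-1}$.

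The heart of the argument is a strengthened Cauchy--Schwarz inequality
\[ |\langle D_jv,D_kw\rangle_{\HH}|\lesssim\gamma^{-|j-k|}\,\|D_jv\|_{\HH}\,\|D_kw\|_{\HH},\qquad \gamma:=\min(\rho,\tilde{\rho})>1. \]
For $j>k$ I would use $\ran D_j\perp_{\HH}\tilde{V}_{j-1}$ to write $\langle D_jv,D_kw\rangle_{\HH}=\langle D_jv,\,D_kw-\tilde g\rangle_{\HH}$ for every $\tilde g\in\tilde{V}_{j-1}$, so that $|\langle D_jv,D_kw\rangle_{\HH}|\le\|D_jv\|_{\HH}\,\dist_{\HH}(D_kw,\tilde{V}_{j-1})$; the distance is controlled by the dual Jackson estimate and the arising factor $\|D_kw\|_{\tilde{\VV}}$ by an inverse estimate on level $k$, producing $\tilde{\rho}^{-(j-1)}\tilde{\rho}^{k}\eqsim\tilde{\rho}^{-(j-k)}$, while $j<k$ is symmetric. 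This is the step I expect to be the main obstacle: it is the only place where the approximation property of the finer complement and the inverse estimate of the coarser detail must be combined across the two scales, and reconciling the primal and dual norms and keeping track of the two rates $\rho,\tilde{\rho}$ here is exactly what fixes the admissible range of $s$.

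Granting this, the case $s=0$ (where $\VV^0=\HH$) follows by Schur's test: summing the strengthened Cauchy--Schwarz inequality against the geometric matrix $(\gamma^{-|j-k|})_{j,k}$ gives $\|\sum_j u_j\|_{\HH}^2\lesssim\sum_j\|u_j\|_{\HH}^2$ for $u_j\in\ran D_j$, whence $\|v\|_{\HH}^2\lesssim\sum_j\|D_jv\|_{\HH}^2$ upon taking $u_j=D_jv$; the reverse (Bessel) inequality follows by duality from the same synthesis bound for the dual operators $D_j^*$, and applied to the tails this also yields $\|(I-Q_j)v\|_{\HH}^2\eqsim\sum_{k>j}\|D_kv\|_{\HH}^2$. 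For $0<s<1$ I would invoke the standard identification of the interpolation space with the approximation space under Jackson and Bernstein, $\|v\|_{\VV^s}^2\eqsim\|v\|_{\HH}^2+\sum_{j}\rho^{2js}\,E_j(v)^2$ with $E_j(v):=\inf_{v_j\in V_j}\|v-v_j\|_{\HH}\eqsim\|(I-Q_j)v\|_{\HH}$; using $\|D_jv\|_{\HH}\le\|(I-Q_{j-1})v\|_{\HH}+\|(I-Q_j)v\|_{\HH}$, the trivial synthesis bound $\|(I-Q_j)v\|_{\HH}\le\sum_{k>j}\|D_kv\|_{\HH}$, and Hardy's inequality for $s>0$, both directions reduce to $\sum_k\rho^{2ks}\|D_kv\|_{\HH}^2$.

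Finally, for $-1<s<0$ I would argue by duality. Since $\HH$ is the pivot space, $(\VV^{-s})'=\VV^{s}$, so $\|v\|_{\VV^s}=\sup_{g\in\VV^{-s}}\langle v,g\rangle_{\HH}/\|g\|_{\VV^{-s}}$. Expanding $\langle v,g\rangle_{\HH}=\sum_{j,k}\langle D_jv,D_kg\rangle_{\HH}$, inserting the weights $\rho^{js}$ and $\rho^{-js}$, and applying the strengthened Cauchy--Schwarz inequality, the bilinear form is bounded on the weighted sequence spaces precisely when the geometric matrix $(\gamma^{-|j-k|}\rho^{(k-j)s})_{j,k}$ is bounded on $\ell_2$; this summability condition is what delimits the admissible range of $s$, here the full interval $(-1,1)$. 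Combined with the equivalence already proved for the exponent $-s\in(0,1)$, this gives $\|v\|_{\VV^s}^2\lesssim\sum_j\rho^{2js}\|D_jv\|_{\HH}^2$, with the reverse inequality obtained symmetrically. On the whole, the only genuinely delicate ingredient is the strengthened Cauchy--Schwarz estimate of the second paragraph; once it is available with the correct rates, the remaining steps are standard summation and interpolation-space arguments.
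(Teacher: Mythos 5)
Your outline is, in essence, the classical stability argument from the literature rather than anything you could have matched against this paper: Theorem~\ref{main} is quoted here without proof, as a special case of the result established in \cite{53}. Measured against that, most of your steps are sound and standard: the projector algebra $Q_jQ_k=Q_{\min(j,k)}$ and $D_jD_k=\delta_{jk}D_j$ (with $D_j:=Q_j-Q_{j-1}$), the strong convergence $Q_j\to I$, the Schur argument at $s=0$ with the Bessel inequality obtained by duality through $D_j^*$, the approximation-space identification for $s\in(0,1)$, and the duality reduction for $s\in(-1,0)$. The genuine gap sits exactly at the step you yourself flagged as the main obstacle, and under the hypotheses as stated it does not close. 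In your strengthened Cauchy--Schwarz estimate for $j>k$ you bound $\dist_{\HH}(D_kw,\tilde V_{j-1})\lesssim\tilde{\rho}^{-(j-1)}\|D_kw\|_{\tilde{\VV}}$ and then invoke ``an inverse estimate on level $k$'' to get $\|D_kw\|_{\tilde{\VV}}\lesssim\tilde{\rho}^{k}\|D_kw\|_{\HH}$. But $D_kw\in V_k$, and \eqref{Bernstein} provides a $\tilde{\VV}$-Bernstein estimate only for elements of $\tilde V_k$; nothing in the hypotheses even guarantees $V_k\subset\tilde{\VV}$. The case $j<k$ is not actually symmetric: there you must approximate $D_jv\in V_j$ from $\tilde V_{k-1}$, so both cases require the same unavailable mixed estimate (a Bernstein inequality for the \emph{primal} spaces in the \emph{dual} scale), and the mirrored mixed estimate (dual details measured in $\VV$) is likewise needed for the dual-side synthesis bound underlying your Bessel step. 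The derivation is legitimate precisely when the two scales are compatible --- in particular in this paper's application, where $\VV=\tilde{\VV}=H^1_{0,\Gamma}(\Omega)\cap H^t(\Omega)$ and $\rho=\tilde{\rho}=2^t$ --- and such a common smoothness scale for both multiresolution analyses is how the general result is framed in \cite{53}; in the abstract two-scale form stated here you must either add that compatibility as a hypothesis or restrict to $\VV=\tilde{\VV}$.

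A second, smaller defect: even granting your estimate with $\gamma=\min(\rho,\tilde{\rho})$, the negative-$s$ duality step requires the matrix $(\gamma^{-|j-k|}\rho^{(k-j)s})_{j,k}$ to be bounded on $\ell_2$, and the Schur sums converge only when $\rho^{|s|}<\gamma$. This yields the full interval only when $\tilde{\rho}\geq\rho$; if $\tilde{\rho}<\rho$ your argument delivers merely $|s|<\log\tilde{\rho}/\log\rho$, contrary to your assertion that the summability condition singles out all of $(-1,1)$. The mixed-scale problem also silently re-enters your ``symmetric'' reverse inequality for $s<0$: the natural test function $g=\sum_j\rho^{js}c_j D_j^*e_j$ (with $e_j=D_jv/\|D_jv\|_{\HH}$) lies in the dual spaces $\tilde V_j$ but must be measured in the primal norm $\|\cdot\|_{\VV^{-s}}$, which again calls for a $\VV$-Bernstein estimate for $\tilde V_j$. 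With $\VV=\tilde{\VV}$ and $\rho=\tilde{\rho}$, as in the setting this paper actually uses, all of these objections evaporate and your proof is complete in all essentials; as a proof of Theorem~\ref{main} in its stated two-scale generality it is not.
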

  
  \begin{remark}[e.g. \cite{249.76}] \label{rem1} Existence of the biorthogonal projector $Q_j$ as in \eqref{stab} is equivalent to
  \be \label{infsup}
 \beta_j:= \inf_{0 \neq v_j \in V_j} \sup_{0 \neq \tilde{v}_j \in \tilde{V}_j} \frac{\langle v_j,\tilde{v}_j \rangle_\HH}{\|v_j\|_{\HH}\|\tilde{v}_j\|_{\HH}}=
  \inf_{0 \neq \tilde{v}_j \in \tilde{V}_j} \sup_{0 \neq v_j \in V_j} \frac{\langle v_j,\tilde{v}_j \rangle_\HH}{\|v_j\|_{\HH}\|\tilde{v}_j\|_{\HH}}>0,
  \ee
  where $\|Q_j\|_{\cL(\HH,\HH)}^{-1}= \beta_j$.
  
  Other equivalent conditions are that for any Riesz basis $\Phi_j$ for $V_j$ there exists a (unique) $\HH$-dual Riesz basis $\tilde{\Phi}_j$ for $\tilde{V}_j$, and that there exist some Riesz bases $\Phi_j$ and $\tilde{\Phi}_j$ for $V_j$ and $\tilde{V}_j$, respectively, such that $\langle \Phi_j,\tilde{\Phi}_j\rangle_\HH$ is bounded invertible.
In the latter case, one verifies that
$$
\beta_j \geq \frac{\|\langle\Phi_j,\tilde{\Phi}_j\rangle^{-1}_\HH \|^{-1}}{\sqrt{\|\langle\Phi_j,\Phi_j\rangle_\HH \|\|\langle \tilde{\Phi}_j,\tilde{\Phi}_j\rangle_\HH\|}}.
$$
  \end{remark}
  
  \begin{corollary} \label{corol1} In the situation of Thm.~\ref{main},
   for $j \geq 0$ let  $\Psi_j=\{\psi_{j,x}\colon x \in J_j\}$ be a \emph{uniform} Riesz basis for $\ran(Q_j-Q_{j-1})=V_j \cap \tilde{V}_{j-1}^{\perp_\HH}$ $(\tilde{V}_{-1}:=\{0\})$, i.e., with
  $\kappa_\HH(\Psi_j):=\|\langle \Psi_j,\Psi_j\rangle_\HH\| \|\langle \Psi_j,\Psi_j\rangle^{-1}_\HH\|$ it holds that $\sup_j \kappa_\HH(\Psi_j)<\infty$.
  Then for $s \in (-1,1)$, 
  $$
 \bigcup_{j=0}^{\infty} \rho^{-js} \Psi_j \text{ is a Riesz basis for } \VV^s.
 $$

In particular, with $\kappa_s$ denoting the quotient of the supremum and infimum over $v \in \VV_s$ of   $\|v\|_{\VV^s}^2 / \sum_{j=0}^{\infty}
 \rho^{2js} \|(Q_{j}-Q_{j-1})v\|_{\HH}^2$ , it holds that
 $$
 \kappa_{\VV^s}\Big(\bigcup_{j=0}^{\infty} \rho^{-js} \Psi_j\Big) \leq \kappa_s \times  \sup_{j \geq 0} \kappa_\HH(\Psi_j).
 $$
 \end{corollary}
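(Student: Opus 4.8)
\emph{Proof proposal.} The plan is to reduce the Riesz basis assertion to a two-sided norm equivalence for the synthesis operator and then push this through Theorem~\ref{main}. Writing $\Psi:=\bigcup_{j\ge 0}\rho^{-js}\Psi_j$ and, for a finitely supported coefficient sequence $\mathbf d=(d_{j,x})$, $\mathbf d_j:=(d_{j,x})_{x\in J_j}$, I would aim to show $\|\mathbf d^\top\Psi\|_{\VV^s}^2\eqsim\|\mathbf d\|^2$ with constants uniform in $\mathbf d$; together with density of $\bigcup_j V_j$ in $\VV^s$ this makes $\Psi$ a Riesz basis for $\VV^s$. The only inputs will be the norm equivalence of Theorem~\ref{main} and the level-wise Riesz bounds encoded in $\kappa_\HH(\Psi_j)$, with $A_j:=\|\langle\Psi_j,\Psi_j\rangle^{-1}_\HH\|^{-1}$ and $B_j:=\|\langle\Psi_j,\Psi_j\rangle_\HH\|$, so that $\kappa_\HH(\Psi_j)=B_j/A_j$.

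First I would record the algebraic structure of the projectors. From $V_{j-1}\subset V_j$ and $\tilde V_{j-1}\subset\tilde V_j$ together with \eqref{stab} one gets $Q_jQ_{j-1}=Q_{j-1}=Q_{j-1}Q_j$, hence $Q_jQ_k=Q_{\min(j,k)}$, so that the differences $P_j:=Q_j-Q_{j-1}$ satisfy $P_jP_k=\delta_{jk}P_j$ and $\ran P_j=V_j\cap\tilde V_{j-1}^{\perp_\HH}$. In particular $P_j$ acts as the identity on $\ran P_j$ and annihilates $\ran P_k$ for $k\neq j$. Since each $\psi_{j,x}$ lies in $\ran P_j$, applying $P_j$ to the finite expansion $\mathbf d^\top\Psi=\sum_k\rho^{-ks}\mathbf d_k^\top\Psi_k$ isolates the $j$-th level, giving $(Q_j-Q_{j-1})\mathbf d^\top\Psi=\rho^{-js}\mathbf d_j^\top\Psi_j$. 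Substituting this into the quantity $S(v):=\sum_{j}\rho^{2js}\|(Q_j-Q_{j-1})v\|_\HH^2$ from Theorem~\ref{main} makes the scaling factors cancel, so that $S(\mathbf d^\top\Psi)=\sum_j\|\mathbf d_j^\top\Psi_j\|_\HH^2$.

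The two estimates are then chained. The uniform Riesz basis property of each $\Psi_j$ gives $A_j\|\mathbf d_j\|^2\le\|\mathbf d_j^\top\Psi_j\|_\HH^2\le B_j\|\mathbf d_j\|^2$, and summing over $j$ shows $S(\mathbf d^\top\Psi)\eqsim\|\mathbf d\|^2=\sum_j\|\mathbf d_j\|^2$ with constants controlled by the $A_j,B_j$. Theorem~\ref{main} supplies $c_s\,S(\mathbf d^\top\Psi)\le\|\mathbf d^\top\Psi\|_{\VV^s}^2\le C_s\,S(\mathbf d^\top\Psi)$, where $c_s,C_s$ are the infimum and supremum defining $\kappa_s=C_s/c_s$. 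Composing the two equivalences yields $\|\mathbf d^\top\Psi\|_{\VV^s}^2\eqsim\|\mathbf d\|^2$. Finally, since $Q_j=\sum_{k\le j}P_k$ telescopes and the $P_k$ are complementary projectors, $\ran Q_j=V_j=\bigoplus_{k\le j}\ran P_k$, so the span of $\Psi$ contains $\bigcup_j V_j$, which is dense in $\HH$ and, by the Jackson estimate \eqref{Jackson} combined with interpolation, dense in $\VV^s$; the norm equivalence then promotes the synthesis map to a bounded bijection $\ell_2\to\VV^s$, i.e. $\Psi$ is a Riesz basis.

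The main work is the bookkeeping of constants for the sharp estimate $\kappa_{\VV^s}(\Psi)\le\kappa_s\times\sup_{j}\kappa_\HH(\Psi_j)$. I would write $\|\mathbf d^\top\Psi\|_{\VV^s}^2/\|\mathbf d\|^2$ as the product of the factor $\|\mathbf d^\top\Psi\|_{\VV^s}^2/S(\mathbf d^\top\Psi)\in[c_s,C_s]$ and the factor $S(\mathbf d^\top\Psi)/\|\mathbf d\|^2$, and observe that the latter is a weighted average, with weights $\|\mathbf d_j\|^2$, of the per-level ratios $\|\mathbf d_j^\top\Psi_j\|_\HH^2/\|\mathbf d_j\|^2\in[A_j,B_j]$. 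The delicate point is organising these so that the resulting supremum/infimum quotient collapses to the per-level condition numbers $\sup_j\kappa_\HH(\Psi_j)$ rather than to the cruder quotient $\sup_jB_j/\inf_jA_j$; this is exactly where the uniform ($\HH$-)normalisation of the $\Psi_j$ across levels must be used, and I expect it to be the principal obstacle. A secondary, purely technical point is the passage from finitely supported $\mathbf d$ to all of $\ell_2$: one must justify that the identity $(Q_j-Q_{j-1})\mathbf d^\top\Psi=\rho^{-js}\mathbf d_j^\top\Psi_j$ and the cancellation in $S$ survive the limit, which follows from the boundedness of the $Q_j$ together with the norm equivalence already established on the dense set.
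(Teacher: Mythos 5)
Your argument is the intended one: the paper gives no separate proof of this corollary, presenting it as an immediate consequence of Theorem~\ref{main}, and your chain is exactly that derivation --- nestedness of the $V_j$, $\tilde V_j$ plus \eqref{stab} give $Q_jQ_k=Q_{\min(j,k)}$, hence $(Q_j-Q_{j-1})(Q_k-Q_{k-1})=\delta_{jk}(Q_j-Q_{j-1})$, so that $(Q_j-Q_{j-1})\mathbf{d}^\top\Psi=\rho^{-js}\mathbf{d}_j^\top\Psi_j$, after which the norm equivalence of Theorem~\ref{main} is composed with the level-wise Riesz bounds $A_j\|\mathbf{d}_j\|^2\le\|\mathbf{d}_j^\top\Psi_j\|_\HH^2\le B_j\|\mathbf{d}_j\|^2$. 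Two small remarks: the density step is even cheaper than your Jackson-plus-interpolation route, since the equivalence of Theorem~\ref{main} itself forces $Q_Jv\to v$ in $\VV^s$; and your summing step tacitly uses $0<\inf_jA_j\le\sup_jB_j<\infty$, which is how ``uniform Riesz basis'' must be read --- the literal hypothesis $\sup_j\kappa_\HH(\Psi_j)<\infty$ alone would not survive rescaling level $j$ by $2^j$.

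The ``delicate point'' you flag at the end is genuine, and it is worth seeing why it cannot be circumvented by cleverer averaging. Your chaining yields
$$
\kappa_{\VV^s}\Big(\bigcup_{j\ge 0}\rho^{-js}\Psi_j\Big)\;\le\;\kappa_s\cdot\frac{\sup_j B_j}{\inf_j A_j}\;=\;\kappa_s\,\sup_j\|\langle\Psi_j,\Psi_j\rangle_\HH\|\cdot\sup_j\|\langle\Psi_j,\Psi_j\rangle_\HH^{-1}\|,
$$
and in general this cannot be improved to $\kappa_s\sup_j\kappa_\HH(\Psi_j)$: take $s=0$ with orthogonal projectors $Q_j$ (so $\kappa_0=1$) and two levels with Gram matrices $\identity$ and $100\,\identity$; then each $\kappa_\HH(\Psi_j)=1$, yet the Gram matrix of the union is $\mathrm{blockdiag}[\identity,100\,\identity]$, with condition number $100$. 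So the constant as stated presumes a consistent scaling of the levels, i.e.\ level-independent Riesz bounds $A_j=A$, $B_j=B$, in which case $\sup_jB_j/\inf_jA_j=B/A=\sup_j\kappa_\HH(\Psi_j)$ and your chaining delivers the stated bound verbatim. This normalization is not an extra assumption in the paper's setting: by Lemma~\ref{lem1}\eqref{rel1}, the Riesz bounds of every $\Sigma_j$ (hence, via Proposition~\ref{construction}, of every $\Psi_{j+1}$) are inherited from one fixed local Gram matrix on the reference triangle, identical for all $j$. With that observation supplied, your proof is complete and coincides with the paper's implicit one.
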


 Recalling that $\ran(Q_0-Q_{-1})=V_0$, the remaining challenge is the construction of $\Psi_j$ for $j \geq 1$, i.e. $\Psi_{j+1}$ for $j \geq 0$:
 \begin{proposition}[\cite{239.17}] \label{construction}
 For $j \geq 0$, let $\Theta_j \cup \Sigma_{j+1}$ and $\tilde{\Phi}_j$ be uniform Riesz bases for $V_{j+1}$ and $\tilde{V}_j$, respectively, such that $\langle \Theta_j,\tilde{\Phi}_j \rangle_\HH=\identity$.
 Then 
 \be \label{formula}
 \Psi_{j+1}=\Xi_{j+1}-\langle \Xi_{j+1},\tilde{\Phi}_j\rangle_\HH \Theta_j
 \ee
is a uniform Riesz basis for $V_{j+1} \cap \tilde{V}_{j}^{\perp_{\HH}}=\ran(Q_{j+1}-Q_{j})$.
 
 With 
 \begin{align*}
 \delta_j&:=\inf_{0 \neq \hat{v}_j \in  \Span \Theta_j} \sup_{0 \neq \tilde{v}_j \in \tilde{V}_j} \frac{\langle \hat{v}_j,\tilde{v}_j \rangle_\HH}{\|\hat{v}_j\|_{\HH}\|\tilde{v}_j\|_{\HH}},\\
 \eps_j&:=\sup_{0 \neq \hat{v}_j \in  \Span \Theta_j} \sup_{0 \neq w_{j+1} \in \Span \Xi_{j+1}} \frac{\langle \hat{v}_j,w_{j+1}\rangle_\HH}{\|\hat{v}_j\|_{\HH}\|w_{j+1}\|_{\HH}},
 \end{align*}
 it holds that
 $$
 \kappa_\HH(\Psi_{j+1}) \leq \frac{(1+\delta_j^{-1})}{\sqrt{1-\eps_j}} \kappa_\HH(\Xi_{j+1}).
 $$
  \end{proposition}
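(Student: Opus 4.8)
The plan is to read \eqref{formula} as the application of a single oblique projector to the candidate collection $\Xi_{j+1}$, and then to control that projector's norm and co-norm by the two angle-type quantities $\delta_j$ and $\eps_j$. First I would verify the defining property directly: since $\langle\Theta_j,\tilde\Phi_j\rangle_\HH=\identity$,
$$
\langle\Psi_{j+1},\tilde\Phi_j\rangle_\HH=\langle\Xi_{j+1},\tilde\Phi_j\rangle_\HH-\langle\Xi_{j+1},\tilde\Phi_j\rangle_\HH\langle\Theta_j,\tilde\Phi_j\rangle_\HH=0,
$$
so that, as $\Xi_{j+1},\Theta_j\subset V_{j+1}$, we have $\Psi_{j+1}\subset V_{j+1}\cap\tilde{V}_j^{\perp_{\HH}}=\ran(Q_{j+1}-Q_j)$, which settles the claimed membership once linear independence is established. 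It remains to prove the Riesz-basis property with the stated conditioning.

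I would introduce $\Pi_j\in\cL(\HH,\HH)$, $\Pi_j v:=\langle v,\tilde\Phi_j\rangle_\HH\,\Theta_j$ (that is, $\Pi_j v=\sum_{\phi\in\tilde\Phi_j}\langle v,\phi\rangle_\HH\,\theta_\phi$, with $\theta_\phi\in\Theta_j$ the partner of $\phi$ under $\langle\Theta_j,\tilde\Phi_j\rangle_\HH=\identity$). From $\langle\Theta_j,\tilde\Phi_j\rangle_\HH=\identity$ one checks $\Pi_j^2=\Pi_j$, $\ran\Pi_j=\Span\Theta_j$ and $\ker\Pi_j=\tilde{V}_j^{\perp_{\HH}}$, so $\Pi_j$ is the projector onto $\Span\Theta_j$ along $\tilde{V}_j^{\perp_{\HH}}$. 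For any coefficient vector ${\bf c}$, \eqref{formula} then reads ${\bf c}^\top\Psi_{j+1}=(I-\Pi_j)w$ with $w:={\bf c}^\top\Xi_{j+1}\in\Span\Xi_{j+1}$, and the whole estimate reduces to bounding $\|(I-\Pi_j)w\|_\HH$ two-sidedly on $\Span\Xi_{j+1}$.

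For the upper bound I use $\|(I-\Pi_j)w\|_\HH\le(1+\|\Pi_j\|_{\cL(\HH,\HH)})\|w\|_\HH$ together with the identity $\|\Pi_j\|_{\cL(\HH,\HH)}=\delta_j^{-1}$, which is the geometric heart of the argument. I would prove it by noting that for $u\in\Span\Theta_j$ the distance to the kernel satisfies $\dist(u,\tilde{V}_j^{\perp_{\HH}})=\|P_{\tilde{V}_j}u\|_\HH=\sup_{0\neq\tilde v_j\in\tilde{V}_j}\langle u,\tilde v_j\rangle_\HH/\|\tilde v_j\|_\HH$ (with $P_{\tilde{V}_j}$ the $\HH$-orthogonal projector onto $\tilde{V}_j$), so that $\inf_{u}\dist(u,\tilde{V}_j^{\perp_{\HH}})/\|u\|_\HH=\delta_j$, and the norm of a projector onto $\Span\Theta_j$ along $\tilde{V}_j^{\perp_{\HH}}$ equals the reciprocal of this infimum. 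For the lower bound I use that $\Pi_j w\in\Span\Theta_j$ forces $\|(I-\Pi_j)w\|_\HH\ge\dist(w,\Span\Theta_j)$, while the definition of $\eps_j$ gives, for $w\in\Span\Xi_{j+1}$, $\sup_{0\neq\hat v_j\in\Span\Theta_j}\langle w,\hat v_j\rangle_\HH/(\|w\|_\HH\|\hat v_j\|_\HH)\le\eps_j$, hence $\dist(w,\Span\Theta_j)\ge\sqrt{1-\eps_j^2}\,\|w\|_\HH\ge\sqrt{1-\eps_j}\,\|w\|_\HH$. Altogether $\sqrt{1-\eps_j}\,\|{\bf c}^\top\Xi_{j+1}\|_\HH\le\|{\bf c}^\top\Psi_{j+1}\|_\HH\le(1+\delta_j^{-1})\|{\bf c}^\top\Xi_{j+1}\|_\HH$.

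Finally I would feed these inequalities into the uniform Riesz-basis property of $\Xi_{j+1}$. The lower bound makes ${\bf c}\mapsto{\bf c}^\top\Psi_{j+1}$ injective, so $\Psi_{j+1}$ is linearly independent; since $\#\Psi_{j+1}=\#\Xi_{j+1}=\dim V_{j+1}-\dim\tilde{V}_j=\dim\ran(Q_{j+1}-Q_j)$ and $\Span\Psi_{j+1}\subseteq\ran(Q_{j+1}-Q_j)$, equality of spans follows, and $\Psi_{j+1}$ is a (uniform) basis of $\ran(Q_{j+1}-Q_j)$. Combining the two-sided estimate with $\|\langle\Xi_{j+1},\Xi_{j+1}\rangle_\HH\|$ and $\|\langle\Xi_{j+1},\Xi_{j+1}\rangle^{-1}_\HH\|$ then controls $\kappa_\HH(\Psi_{j+1})$ by $(1+\delta_j^{-1})$, $(1-\eps_j)$ and $\kappa_\HH(\Xi_{j+1})$, uniformly in $j$ since all hypotheses are uniform. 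The main obstacle I anticipate is establishing the two geometric identities of the third paragraph—above all $\|\Pi_j\|_{\cL(\HH,\HH)}=\delta_j^{-1}$—and then the careful bookkeeping needed to reach precisely the stated constant $\frac{1+\delta_j^{-1}}{\sqrt{1-\eps_j}}$ rather than the cruder $\frac{(1+\delta_j^{-1})^2}{1-\eps_j}$ that a naive product of the two one-sided bounds produces.
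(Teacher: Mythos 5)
Your core argument is correct, and it is essentially the standard route behind this proposition (the paper itself gives no proof, importing the statement from \cite{239.17}, which in turn follows \cite{56}): reading \eqref{formula} as $\Psi_{j+1}=(I-\Pi_j)\Xi_{j+1}$ with $\Pi_j v=\langle v,\tilde{\Phi}_j\rangle_\HH\,\Theta_j$ the oblique projector onto $\Span\Theta_j$ along $\tilde{V}_j^{\perp_\HH}$, the identity $\|\Pi_j\|_{\cL(\HH,\HH)}=\delta_j^{-1}$ via $\dist(u,\tilde{V}_j^{\perp_\HH})=\|P_{\tilde{V}_j}u\|_\HH$, and the resulting two-sided estimate
$$
\sqrt{1-\eps_j^2}\,\|{\bf c}^\top\Xi_{j+1}\|_\HH\leq\|{\bf c}^\top\Psi_{j+1}\|_\HH\leq(1+\delta_j^{-1})\,\|{\bf c}^\top\Xi_{j+1}\|_\HH
$$
are all right (you even get the sharper $\sqrt{1-\eps_j^2}$). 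Two small repairs: your dimension count for the spanning step is fine here because all spaces are finite dimensional, but it is cleaner and more general to note that any $v\in V_{j+1}\cap\tilde{V}_j^{\perp_\HH}$ satisfies $v=(I-\Pi_j)v$, and since $I-\Pi_j$ annihilates $\Theta_j$, expanding $v={\bf a}^\top\Theta_j+{\bf b}^\top\Xi_{j+1}$ yields $v={\bf b}^\top\Psi_{j+1}$ directly; and for the word \emph{uniform} in the conclusion you should add the one-line verifications that $\inf_j\delta_j>0$ (test with $\tilde{v}_j={\bf a}^\top\tilde{\Phi}_j$ against $\hat{v}_j={\bf a}^\top\Theta_j$, so $\langle\hat{v}_j,\tilde{v}_j\rangle_\HH=\|{\bf a}\|^2$, and invoke the uniform Riesz bounds) and $\sup_j\eps_j<1$ (from $\Theta_j\cup\Xi_{j+1}$ being a uniform Riesz basis).

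The obstacle you flag at the end, however, is not bookkeeping: with the paper's definition $\kappa_\HH(\Psi):=\|\langle\Psi,\Psi\rangle_\HH\|\,\|\langle\Psi,\Psi\rangle_\HH^{-1}\|$ (Corollary~\ref{corol1}), the unsquared constant is unattainable because the inequality is then simply false. Take $\HH=\R^3$, $\Theta_j=\tilde{\Phi}_j=\{e_1\}$, $\Xi_{j+1}=\{\eps e_1+\sqrt{1-\eps^2}\,e_2,\ e_3\}$: then $\langle\Theta_j,\tilde{\Phi}_j\rangle_\HH=\identity$, $\delta_j=1$, $\eps_j=\eps$, $\langle\Xi_{j+1},\Xi_{j+1}\rangle_\HH=\identity$ so $\kappa_\HH(\Xi_{j+1})=1$, while $\Psi_{j+1}=\{\sqrt{1-\eps^2}\,e_2,\,e_3\}$ has Gram condition number $(1-\eps^2)^{-1}$, which exceeds $2/\sqrt{1-\eps}$ once $\eps$ is close to $1$ (at $\eps=0.99$: $50.3$ versus $20$). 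What your two-sided estimate proves is precisely
$$
\sqrt{\kappa_\HH(\Psi_{j+1})}\leq\frac{1+\delta_j^{-1}}{\sqrt{1-\eps_j}}\,\sqrt{\kappa_\HH(\Xi_{j+1})},
$$
i.e., the stated constant is correct when $\kappa$ denotes the \emph{ratio of the Riesz constants} (the square root of the Gram condition number), which is presumably the convention intended in \cite{239.17}; under the Gram convention of the present paper the correct constant is the square $(1+\delta_j^{-1})^2/(1-\eps_j)$ --- exactly your ``crude'' bound. So rather than searching for sharper bookkeeping, your proof is already complete once this normalization mismatch is recorded; since $\kappa\mapsto\sqrt{\kappa}$ is increasing, all qualitative conclusions (uniformity of the Riesz bounds, and the use in Corollary~\ref{corol1}) are unaffected.
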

 
 \begin{remark}[dual wavelets] \label{duals} With $(\,)^*$ denoting the adjoint w.r.t. $\langle\,,\,\rangle_\HH$, under the conditions of Thm~\ref{main} equivalently it holds that for every $s \in (-1,1)$,
$$
 \|\tilde{v}\|_{\tilde{\VV}^s}^2 \eqsim \displaystyle \sum_{j=0}^{\infty}
 \rho^{2js} \|(Q^*_{j}-Q^*_{j-1})\tilde{v}\|_{\HH}^2 \quad  (\tilde{v} \in \tilde{\VV}^s),
 $$
 so that for $\tilde{\Psi}_j$ being \emph{any} uniform Riesz basis for $\ran(Q^*_j-Q^*_{j-1})=\tilde{V}_{j+1} \cap V_j^{\perp_\HH}$,
   $$
 \bigcup_{j=0}^{\infty} \rho^{-js} \tilde{\Psi}_j \text{ is a Riesz basis for } \tilde{\VV}^s.
$$

One verifies that the pair $(\ran(Q^*_j-Q^*_{j-1}),\ran(Q_j-Q_{j-1}))$ satisfies \eqref{infsup} with infsup constant $\gamma_j:=\|Q_j-Q_{j-1}\|_{\cL(\HH,\HH)}^{-1} \gtrsim 1$.
Consequently, as stated in Remark~\ref{rem1}, given a Riesz basis $\Psi_j$ for $\ran(Q_j-Q_{j-1})$, there exists a unique dual or biorthogonal Riesz basis $\tilde{\Psi}_j$ for $\ran(Q^*_j-Q^*_{j-1})$, i.e. with $\langle \Psi_j,\tilde{\Psi}_j\rangle_\HH=\identity$.
From
$$
\gamma_j \|{\bf c}_j^\top \tilde{\Psi}_j\|_{\HH} \leq \sup_{0 \neq v_j \in \ran(Q_j-Q_{j-1})} \frac{\langle {\bf c}_j^\top \tilde{\Psi}_j,v_j\rangle_\HH}{\|v_j\|_\HH} \leq \|{\bf c}_j^\top \tilde{\Psi}_j\|_{\HH},
$$
and
$$
\sup_{0 \neq v_j \in \ran(Q_j-Q_{j-1})} \frac{\langle {\bf c}_j^\top \tilde{\Psi}_j,v_j\rangle_\HH}{\|v_j\|_\HH}
=
\sup_{0 \neq {\bf d}_j} \frac{\langle {\bf c}_j^\top \tilde{\Psi}_j,{\bf d}_j^\top \Psi_j\rangle_\HH}{\|v_j\|_\HH}
=
\sup_{0 \neq {\bf d}_j} \frac{\langle {\bf c}_j,{\bf d}_j\rangle}{\|{\bf d}_j\|}\frac{\|{\bf d}_j\|}{\|{\bf d}^\top_j \Psi_j\|_\HH}
$$
one infers that $\kappa_\HH(\tilde{\Psi}_j) \leq \gamma_j^{-1} \kappa_\HH(\Psi_j)$.
Consequently, when the $\Psi_j$ are uniform Riesz bases for $\ran(Q_j-Q_{j-1})$, then their duals are uniform Riesz bases for $\ran(Q^*_j-Q^*_{j-1})$, and for $s \in (-1,1)$, 
$\bigcup_{j=0}^{\infty} \rho^{-js} \Psi_j$ and $\bigcup_{j=0}^{\infty} \rho^{js} \tilde{\Psi}_j$ are $\HH$-biorthogonal Riesz bases for $\VV^s$ and $\tilde{\VV}^{-s}$, respectively.

Moreover, if $\tilde{\VV}=\VV$, then from $\|{\bf c}^\top {\bf D} \Psi\|_{\VV^s}=\sup_{0 \neq {\bf d} \in \ell_2} \frac{\langle {\bf c}^\top  {\bf D} \Psi,{\bf d}^\top  {\bf D}^{-1}\tilde{\Psi}\rangle_\HH}{\|{\bf d}^\top {\bf D}^{-1} \tilde{\Psi}\|_{\tilde{\VV}^{-s}}}=\sup_{0 \neq {\bf d}\in \ell_2} \frac{\langle {\bf c},{\bf d}\rangle}{\|{\bf d}\|} \frac{\|{\bf d}\|}{\|{\bf d}^\top {\bf D}^{-1}\tilde{\Psi}\|_{\tilde{\VV}^{-s}}}$, where ${\bf D} \eqsim \mathrm{blockdiag}[\rho^{-js} \identity]_{j \geq 0}$, and the analogous result with interchanged roles of $(\Psi,{\bf D})$ and $(\tilde{\Psi},{\bf D}^{-1})$, one infers that \linebreak
$\kappa_{\VV^s}(\bigcup_{j=0}^{\infty} \rho^{-js} \Psi_j)=\kappa_{\tilde{\VV}^{-s}}(\bigcup_{j=0}^{\infty} \rho^{js} \tilde{\Psi}_j)$.
\end{remark}

\begin{remark}[dual wavelets cont'd] \label{dualscontd}
An explicit expression for $\tilde{\Psi}_j$ can be obtained in the following special case: 
For $j \geq 0$, let $\Phi_j$ and $\tilde{\Phi}_j$ be $\HH$-biorthogonal Riesz bases for $V_j$ and $\tilde{V}_j$, respectively.
Let ${\bf M}_j=[ {\bf M}_{j,0}\,{\bf M}_{j,1}]$ be the basis transformation from $\Phi_j \cup \Psi_{j+1}$ to $\Phi_{j+1}$, i.e., $[\Phi_j^\top \, \Psi_{j+1}^\top]=\Phi_{j+1}^\top {\bf M}_j$. Analogously, let $[\tilde{\Phi}_j^\top \, \tilde{\Psi}_{j+1}^\top]=\tilde{\Phi}_{j+1}^\top {\bf \tilde{M}}_j$. Biorthogonality shows that
${\bf M}_{j,0}=\langle \tilde{\Phi}_{j+1},\Phi_j\rangle_\HH$, ${\bf M}_{j,1}=\langle \tilde{\Phi}_{j+1},\Psi_{j+1}\rangle_\HH$, and analogous relations at the dual side, as well as ${\bf \tilde{M}}_j={\bf M}_j^{-\top}$.

Now let $\Psi_j$ be constructed as in Prop.~\ref{construction} for the case that $\Theta_j=\Phi_j$ and thus $\hat{V}_j=V_j$.
Then \eqref{formula} reads as ${\bf M}_{j,1}=(\identity-{\bf M}_{j,0} {\bf \tilde{M}}_{j,0}^\top) {\bf R}_{j,1}$, where ${\bf R}_{j,1}=\langle \tilde{\Phi}_{j+1},\Xi_{j+1}\rangle_\HH$. We conclude that
$$
{\bf M}_j=[ {\bf M}_{j,0}\,\,{\bf R}_{j,1}] \left[\begin{array}{@{}cc@{}} \identity & -{\bf \tilde{M}}_{j,0}^\top {\bf R}_{j,1} \\ 0 & \identity\end{array}\right], \text{ i.e. }
{\bf \tilde{M}}_j= \left[\begin{array}{@{}cc@{}} \identity & 0 \\ {\bf R}_{j,1}^\top {\bf \tilde{M}}_{j,0} & \identity\end{array}\right] [ {\bf M}_{j,0}\,\,{\bf R}_{j,1}]^{-\top},
$$
meaning that dual wavelets become explicitly available in terms of $\tilde{\Phi}_{j+1}$ when additionally $\Xi_{j+1}$ is chosen such that the basis transformation $[{\bf M}_{j,0}\,\,{\bf R}_{j,1}]^{-1}$ from $\Phi_{j+1}$ to the two-level basis $\Phi_j \cup \Xi_{j+1}$ is explicitly available.

Classical `stationary' wavelet constructions on the line provide explicitly given (primal and) dual wavelets.
Explicit knowledge of dual wavelets is crucial for applications as data compression and data analysis.
For applications as preconditioning and the (adaptive) solving of operator equations, however, dual wavelets do not enter the computation,
and wavelet constructions on general, non-rectangular domains usually do not provide them (an exception is \cite{249.76}). Allowing $\hat{V}_j \neq V_j$, thus giving up explicit knowledge of dual wavelets, gives an enormous additional freedom in the construction of $\Psi_{j+1}$ in Prop.~\ref{construction}, which we will also exploit in the current work.
\end{remark}

\section{Construction of quadratic Lagrange finite element wavelets} \label{Sconstruction}
\subsection{Multi-resolution analyses}
Given a conforming triangulation $\tria_0$ of a polygon $\Omega \subset \R^2$, let $(\tria_j)_{j \geq 0}$ be the sequence of triangulations where $\tria_{j+1}$ is created from $\tria_j$ by subdividing each triangle $T \in \tria_j$ into four sub-triangles by connecting the midpoints of the edges of $T$ (\emph{red-refinement}).

For $\Gamma \subset \partial\Omega$ being a union of closed edges of triangles $T \in \tria_0$, we define $V_j$ ($\tilde{V}_j$) as the space of continuous piecewise quadratics (linears) w.r.t. $\tria_j$ ($\tria_{j+1}$) that vanish at $\Gamma$. We let ${\mathcal N}(\tria_j)$ denote the set of vertices that are not on $\Gamma$ of $T \in \tria_j$.

We take $\HH:=L_2(\Omega)$, and for some $t \in [1,\frac{3}{2})$, define $\VV=\tilde{\VV}:=H^1_{0,\Gamma}(\Omega) \cap H^t(\Omega)$.
With these definitions, the Jackson and Bernstein estimates \eqref{Jackson}--\eqref{Bernstein} are satisfied with $\rho=2^t$.
After having equipped $V_j$ and $\tilde{V}_j$ with uniform Riesz bases, later in Sect.~\ref{Sinfsup}, with the aid of Remark~\ref{rem1} we will verify also the remaining condition \eqref{stab} of Theorem~\ref{main}.
For $|s|\leq 2$, we define
$$
\ZZ^s\!:=\!\big[(H^1_{0,\Gamma}(\Omega) \cap H^2(\Omega))',H^1_{0,\Gamma}(\Omega) \cap H^2(\Omega)\big]_{\frac{s}{4}+\frac{1}{2}}\!\simeq\!
\left\{ \begin{array}{@{}c@{}l} 
H^1_{0,\Gamma}(\Omega) \cap H^s(\Omega) &\,\, s \in [1,2],\\
 H^s_{0,\Gamma}(\Omega)& \,\,s \in (\frac{1}{2},1),\\
 {[L_2(\Omega),H^1_{0,\Gamma}(\Omega)]_{\frac{1}{2}}}& \,\,s=\frac{1}{2},\\
H^s(\Omega) & \,\,s \in [0,\frac{1}{2}),\\
(\ZZ^{-s})' &\,\, s\in [-2,0).
\end{array}
\right.
$$
Thanks to the \emph{reiteration theorem} we have that $\VV^s\simeq \ZZ^{s t}$. Since $t \in [1,\frac{3}{2})$ was arbitrary, from Corollary~\ref{corol1} we conclude that if $\Psi_j$ is a uniform $L_2(\Omega)$-Riesz basis for $V_j \cap \tilde{V}_{j-1}^{\perp_{L_2(\Omega)}}$, then for $|s|<\frac{3}{2}$,
$$
\bigcup_{j=0}^\infty 2^{-js} \Psi_j \text{ is a Riesz basis for } \ZZ^s.
$$

From the approximation properties of $(V_j)_j$, we infer that for $j \geq 0$, $\psi_{j+1,x} \in \Psi_{j+1}$, $p \in [1,\infty]$ and $\frac{1}{p}+\frac{1}{q}=1$,
\begin{align*}
|\langle u,\psi_{j+1,x}&\rangle_{L_2(\Omega)}| \leq \|\psi_{j+1,x}\|_{L_p(\Omega)} \inf_{v_j \in V_j} \|u-v_j\|_{L_q(\supp \psi_{j,x})}\\
& \lesssim \|\psi_{j+1,x}\|_{L_p(\Omega)} (2^{-j})^2 |u|_{W_q^2(\mathrm{conv\, hull}(\supp \psi_{j,x}))} \quad (u \in W_q^2(\Omega) \cap H^1_{0,\Gamma}(\Omega)),
\end{align*}
that is, the wavelets will have \emph{cancellation properties of order $2$}. In particular, away from the Dirichlet boundary $\Gamma$, they have 2 \emph{vanishing moments}.

\subsection{Local-to-global basis construction}
Following \cite{56,239.17}, in this subsection it will be shown how to reduce the construction of the various collections of functions on $\Omega$, that are needed for the construction of the biorthogonal wavelet basis corresponding to the primal and dual multi-resolution analyses $(V_j)_j$ and $(\tilde{V}_j)_j$, to the construction of corresponding collections of `local' functions on the reference triangle
$$
\bm{T} = \{\lambda \in \R^{3} \colon  \sum_{i=1}^{3} \lambda_i=1, \lambda_i \geq 0\}.
$$
For $1 \leq i \leq 3$, let $\bm{T}_i=\{\lambda \in \bm{T}\colon \lambda_i \leq \frac{1}{2}\}$, and let  $\bm{T}_4=\overline{\bm{T}\setminus \cup_{i=1}^3 \bm{T}_i}$ (\emph{red-refinement}).

For any closed triangle $T$, let $\lambda_T(x) \in \bm{T}$ denote
the barycentric coordinates of $x \in T$ with respect to the
set of vertices of $T$ ordered in some way.

We consider finite collections
of functions $\bm{\Sigma}=\{\bm{\sigma}_{\lambda}:
\lambda \in \bm{I}_{\bm{\Sigma}}\}$ on $\bm{T}$ that satisfy
\begin{enumerate}
\item [($\EuScript{V}$)]
  $\bm{\sigma}_{\lambda}$
  vanishes on any edge or vertex that does not include $\lambda$,
\item [($\EuScript{S}$)]
  $\pi(\bm{I}_{\bm{\Sigma}}) = \bm{I}_{\bm{\Sigma}}$
  and
  $\bm{\sigma}_{\lambda} = \bm{\sigma}_{\pi(\lambda)} \circ \pi$
  for any permutation $\pi:\R^{3} \rightarrow \R^{3}$,
\item [($\EuScript{I}$)]
$\bm{\Sigma}$ is an independent collection of continuous functions.
\end{enumerate}

Such collections of local functions can be used to assemble
collections of global functions in a way known from finite
element methods:
For $j \geq 0$ and with 
\begin{equation}
\label{constrij}
  I_{\Sigma_j} := \{x \in \bar{\Omega}\setminus\Gamma: \lambda_T(x) \in \bm{I}_{\bm{\Sigma}}
   \text{ for some } T \in \tau_{j}\},
\end{equation}
we define the collection $\Sigma_j = \{\sigma_{j,x}\colon x \in I_{\Sigma_j}\}$
of functions on $\Omega$ by
\begin{equation}
\label{constrdef}
  \sigma_{j,x}(y) =
  \left\{ \begin{array}{cl} \mu(x;\tau_j) \bm{\sigma}_{\lambda_T(x)}
  (\lambda_T(y)) & \text{if } x,y \in T \in \tau_{j} \\
  0 & \mbox{otherwise} \end{array} \right.
\end{equation}
with scaling factor $\mu(x;\tau_j) :=
\big( \sum_{\{T \in \tau_j \colon T \ni x\}}
\vol(T)\big)^{-\frac{1}{2}}$.
Note that
the assumptions
($\EuScript{V}$), ($\EuScript{S}$) and ($\EuScript{I}$) show
that $\Sigma_j$ are independent collections of well-defined, continuous functions on $\Omega$.

\begin{lemma} \label{lem1}
Let $\bm{\Sigma}$ and $\bm{\tilde{\Sigma}}$ be two collections of `local'
functions on $\bm{T}$ both satisfying
$(\EuScript{V})$, $(\EuScript{S})$ and $(\EuScript{I})$.
Let $\Sigma_j$ and $\tilde{\Sigma}_j$ denote the corresponding
collections of `global' functions on $\Omega$. Then:
\renewcommand{\theenumi}{\roman{enumi}}
\begin{enumerate}
\item \label{rel1}
$
\|\langle \Sigma_j,\Sigma_j \rangle_{L_2(\Omega)}\| \leq \big\|\frac{\langle \bm{\Sigma},\bm{\Sigma} \rangle_{L_2(\bm{T})}}{\vol(\bm{T})}\big \|,\quad
\|\langle \Sigma_j,\Sigma_j \rangle^{-1}_{L_2(\Omega)}\| \leq \big\|\Big(\frac{\langle \bm{\Sigma},\bm{\Sigma} \rangle_{L_2(\bm{T})}}{\vol(\bm{T})}\Big)^{-1}
\big\|.
$
\item \label{rel2} If 
$\bm{I}_{\bm{\Sigma}} = \bm{I}_{\bm{\tilde{\Sigma}}}$, and 
$\mu \identity \leq \frac{\langle \bm{\Sigma},\bm{\tilde{\Sigma}} \rangle_{L_2(\bm{T})}}{\vol(\bm{T})} \leq M \identity$
then  
$\mu \identity \leq  \langle\Sigma_j,\tilde{\Sigma}_j \rangle_{L_2(\Omega)}  \leq M \identity$.
\item \label{rel3}
$\displaystyle 
\sup_{\begin{array}{@{}c@{}} \scriptstyle 0\neq v_j \in \Span \Sigma_j \vspace*{-0.5ex}\\ \scriptstyle  0\neq \tilde{v}_j \in \Span \tilde{\Sigma}_j\end{array}}
\frac{\langle v_j,\tilde{v}_j\rangle_{L_2(\Omega)}}{\|v_j\|_{L_2(\Omega)}\|\tilde{v}_j\|_{L_2(\Omega)}}
\leq
\sup_{\begin{array}{@{}c@{}} \scriptstyle 0\neq v \in \Span \bm{\Sigma}\vspace*{-0.5ex}\\ \scriptstyle  0\neq \tilde{v} \in \Span \bm{\tilde{\Sigma}} \end{array}}
\frac{\langle v,\tilde{v}\rangle_{L_2(\bm{\Sigma})}}{\|v\|_{L_2(\bm{\Sigma})}\|\tilde{v}\|_{L_2(\bm{\Sigma})}}.
$
\end{enumerate}
\end{lemma}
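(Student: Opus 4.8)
The plan is to reduce all three statements to a single computation on the reference triangle $\bm{T}$, by writing every $L_2(\Omega)$ inner product as a sum of contributions over the triangles $T\in\tau_j$ and pulling each contribution back to $\bm{T}$ via the affine map $\lambda_T$. The starting point is the elementary identity
\begin{equation*}
\langle \sigma_{j,x},\sigma_{j,x'}\rangle_{L_2(\Omega)}
= \mu(x;\tau_j)\,\mu(x';\tau_j)\sum_{T\ni x,\,x'}
\frac{\vol(T)}{\vol(\bm{T})}\,\langle\bm{\sigma}_{\lambda_T(x)},\bm{\sigma}_{\lambda_T(x')}\rangle_{L_2(\bm{T})},
\end{equation*}
which follows from \eqref{constrdef} together with the change of variables $\lambda=\lambda_T(y)$ (contributing the Jacobian factor $\vol(T)/\vol(\bm{T})$), and which uses $(\EuScript{V})$ to guarantee that $\sigma_{j,x}|_T$ is \emph{exactly} $\mu(x;\tau_j)\,\bm{\sigma}_{\lambda_T(x)}\circ\lambda_T$ with no stray edge or vertex contributions. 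Property $(\EuScript{S})$ ensures that $\lambda_T(x)\in\bm{I}_{\bm{\Sigma}}$ for \emph{every} $T\ni x$ whenever $x\in I_{\Sigma_j}$, keeping the index bookkeeping consistent, and the single relation that will make the weighted sums collapse is the defining identity $\mu(x;\tau_j)^{-2}=\sum_{T\ni x}\vol(T)$.

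For claim~\eqref{rel1} I would write $\langle\Sigma_j,\Sigma_j\rangle_{L_2(\Omega)}=D\bm{B}D$, where $D$ is the diagonal matrix with entries $\mu(x;\tau_j)$ and $\bm{B}=\sum_{T}\frac{\vol(T)}{\vol(\bm{T})}P_T\,\bm{G}\,P_T^\top$ is the finite-element assembly of the local Gram matrix $\bm{G}=\langle\bm{\Sigma},\bm{\Sigma}\rangle_{L_2(\bm{T})}$, with $P_T$ the Boolean prolongation from local to global indices. Evaluating the quadratic form on a vector $\mathbf{c}$ and setting $\mathbf{d}=D\mathbf{c}$, $\mathbf{d}_T=P_T^\top\mathbf{d}$, gives $\langle\langle\Sigma_j,\Sigma_j\rangle_{L_2(\Omega)}\mathbf{c},\mathbf{c}\rangle=\sum_T\frac{\vol(T)}{\vol(\bm{T})}\langle\bm{G}\mathbf{d}_T,\mathbf{d}_T\rangle$. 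Bounding the local form by $\|\bm{G}\|\,\|\mathbf{d}_T\|^2$ above and $\|\bm{G}^{-1}\|^{-1}\|\mathbf{d}_T\|^2$ below (positive definiteness of $\bm{G}$ from $(\EuScript{I})$), the volume weights telescope by the scaling relation:
\begin{equation*}
\sum_T\frac{\vol(T)}{\vol(\bm{T})}\|\mathbf{d}_T\|^2
=\frac{1}{\vol(\bm{T})}\sum_x d_x^2\sum_{T\ni x}\vol(T)
=\frac{1}{\vol(\bm{T})}\sum_x c_x^2=\frac{\|\mathbf{c}\|^2}{\vol(\bm{T})}.
\end{equation*}
This yields $\|\langle\Sigma_j,\Sigma_j\rangle_{L_2(\Omega)}\|\le\|\bm{G}\|/\vol(\bm{T})$ and a smallest eigenvalue $\ge\|\bm{G}^{-1}\|^{-1}/\vol(\bm{T})$, i.e. $\|\langle\Sigma_j,\Sigma_j\rangle_{L_2(\Omega)}^{-1}\|\le\vol(\bm{T})\|\bm{G}^{-1}\|$, which are exactly the two asserted inequalities.

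Claim~\eqref{rel2} is the identical computation with $\bm{G}$ replaced by $\bm{G}'=\langle\bm{\Sigma},\bm{\tilde{\Sigma}}\rangle_{L_2(\bm{T})}$, the shared index set being furnished by $\bm{I}_{\bm{\Sigma}}=\bm{I}_{\bm{\tilde{\Sigma}}}$ and the common scaling factor. Feeding the hypothesis, in the form $\mu\,\vol(\bm{T})\|\mathbf{d}_T\|^2\le\langle\bm{G}'\mathbf{d}_T,\mathbf{d}_T\rangle\le M\,\vol(\bm{T})\|\mathbf{d}_T\|^2$, into the same telescoping sum cancels the factor $\vol(\bm{T})$ against $1/\vol(\bm{T})$ and leaves precisely $\mu\|\mathbf{c}\|^2\le\langle\langle\Sigma_j,\tilde\Sigma_j\rangle_{L_2(\Omega)}\mathbf{c},\mathbf{c}\rangle\le M\|\mathbf{c}\|^2$. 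For claim~\eqref{rel3} I would instead fix $v_j\in\Span\Sigma_j$ and $\tilde v_j\in\Span\tilde\Sigma_j$, observe that on each $T$ the restrictions pull back to $w_T\in\Span\bm{\Sigma}$ and $\tilde w_T\in\Span\bm{\tilde{\Sigma}}$, apply the reference-triangle bound (the right-hand supremum, say $\bar\gamma$) triangle by triangle, and then estimate $\sum_T\frac{\vol(T)}{\vol(\bm{T})}\|w_T\|_{L_2(\bm{T})}\|\tilde w_T\|_{L_2(\bm{T})}$ by the discrete Cauchy--Schwarz inequality; since $\frac{\vol(T)}{\vol(\bm{T})}\|w_T\|_{L_2(\bm{T})}^2=\|v_j\|_{L_2(T)}^2$, the two resulting factors are $\|v_j\|_{L_2(\Omega)}$ and $\|\tilde v_j\|_{L_2(\Omega)}$, giving $\langle v_j,\tilde v_j\rangle_{L_2(\Omega)}\le\bar\gamma\,\|v_j\|_{L_2(\Omega)}\|\tilde v_j\|_{L_2(\Omega)}$ and hence the claim after taking the supremum.

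The one place demanding genuine care---and the main obstacle---is the bookkeeping that makes the element decomposition \emph{exact}: one must verify through $(\EuScript{V})$ that each $\sigma_{j,x}|_T$ carries no boundary contribution, through $(\EuScript{S})$ that $\lambda_T(x)\in\bm{I}_{\bm{\Sigma}}$ uniformly over all $T\ni x$ so that the prolongations $P_T$ are well defined, and that the chosen scaling factor is exactly the one whose squared inverse equals the vertex--volume sum, so that the telescoping closes to $\|\mathbf{c}\|^2$. Once these points are secured, all three parts are the same affine change of variables followed by a spectral (or Cauchy--Schwarz) estimate on the single reference triangle.
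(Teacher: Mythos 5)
Your proposal is correct and takes essentially the same route as the paper's own proof: both decompose the global $L_2(\Omega)$ bilinear forms element by element, pull back to $\bm{T}$ with the Jacobian factor $\vol(T)/\vol(\bm{T})$, use the scaling identity $\mu(x;\tau_j)^{-2}=\sum_{T\ni x}\vol(T)$ to collapse the weighted sum of local vector norms to $\|\mathbf{c}\|^2$, and then conclude via spectral bounds on the local Gram matrix for (i)--(ii) and a per-triangle supremum followed by Cauchy--Schwarz for (iii). Your assembly factorization $D\bm{B}D$ with Boolean prolongations $P_T$ is merely a notational repackaging of the paper's scaled local vectors $\mathbf{c}_T=\big(c_{j,x}\,\mu(x;\tau_j)\sqrt{\vol(T)}\big)_{x}$, so no substantive difference remains.
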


\begin{proof} We note that for ${\bf \tilde{c}}_j \in \ell_2(I_{\tilde{\Sigma}_j})$,  ${\bf c}_j \in \ell_2(I_{\Sigma_j})$, it holds that
\begin{align*}
&\big\langle \langle \Sigma_j,\tilde{\Sigma}_j \rangle_{L_2(\Omega)} {\bf \tilde{c}}_j, {\bf c}_j\big\rangle=
\langle {\bf c}_j^\top \Sigma_j,{\bf \tilde{c}}_j^\top \tilde{\Sigma}_j \rangle_{L_2(\Omega)}
\\
&=\sum_{T \in \tria_j} \frac{\vol(T)}{\vol(\bm{T})}  \Big\langle \sum_{x \in I_{\Sigma_j} \cap T} c_{j,x} \mu(x;\tria_j) \bm{\sigma}_{\lambda_T(x)},\sum_{y \in I_{\tilde{\Sigma}_j} \cap T} \tilde{c}_{j,y} \mu(y;\tria_j) \bm{\tilde{\sigma}}_{\lambda_T(y)} \Big\rangle_{L_2(\bm{T})}
\\
&=\sum_{T \in \tria_j} \big\langle \langle \bm{\Sigma}, \bm{\tilde{\Sigma}}\rangle_{L_2(\bm{T})} {\bf \tilde{c}}_T,  {\bf c}_T\big\rangle,
\end{align*}
where ${\bf \tilde{c}}_T:=\Big(\tilde{c}_{j,y} \mu(y;\tria_j) \sqrt{\vol(T)}\Big)_{y \in I_{\tilde{\Sigma}_j} \cap T}$,
${\bf c}_T:=\Big(c_{j,x} \mu(x;\tria_j) \sqrt{\vol(T) }\Big)_{x \in I_{\Sigma_j} \cap T}$, 
and 
\begin{align*}
\sum_{T \in \tria_j} \|{\bf \tilde{c}}_T\|^2&=\sum_{T \in \tria_j} \sum_{y \in I_{\tilde{\Sigma}_j} \cap T} |\tilde{c}_{j,y}|^2 \mu(y;\tria_j)^2 \vol(T)
\\ &=
\sum_{y \in I_{\tilde{\Sigma}_j}} |\tilde{c}_{j,y}|^2 \mu(y;\tria_j)^2
\sum_{\{T \in \tria_j\colon T \ni y\}} \vol(T)=\|{\bf \tilde{c}}_j\|^2,
\end{align*}
and similarly $\sum_{T \in \tria_j} \|{\bf c}_T\|^2=\|{\bf c}_j\|^2$.

These relations show \eqref{rel2} and \eqref{rel1}, for the latter using that for $A=A^\top>0$, $\|A\|=\sup_{0\neq x}\frac{\langle A x,x\rangle}{\|x\|^2}$, $\|A^{-1}\|^{-1}=\inf_{0\neq x}\frac{\langle A x,x\rangle}{\|x\|^2}$.

Denoting the value of the supremum at the right hand side of \eqref{rel3} as $Q$, we have
\begin{align*}
\langle v_j,\tilde{v}_j\rangle_{L_2(\Omega)} &=
\sum_{T \in \tria_j} \frac{\vol(T)}{\vol(\bm{T})} \langle v_j \circ \lambda_T^{-1},\tilde{v}_j \circ \lambda_T^{-1}\rangle_{L_2(\bm{T})}\\
& \leq Q \sum_{T \in \tria_j} \frac{\vol(T)}{\vol(\bm{T})} \|v_j \circ \lambda_T^{-1}\|_{L_2(\bm{T})} \|\tilde{v}_j \circ \lambda_T^{-1}\|_{L_2(\bm{T})}\\
& \leq Q \Big[\sum_{T \in \tria_j} \frac{\vol(T)}{\vol(\bm{T})}  \|v_j \circ \lambda_T^{-1}\|_{L_2(\bm{T})}^2\Big]^{\frac{1}{2}} \Big[ \sum_{T \in \tria_j} \frac{\vol(T)}{\vol(\bm{T})}  \|\tilde{v}_j \circ \lambda_T^{-1}\|_{L_2(\bm{T})}^2\Big]^{\frac{1}{2}}\\
&= Q \|v_j\|_{L_2(\Omega)}\|\tilde{v}_j\|_{L_2(\Omega)},
\end{align*}
which completes the proof.
\end{proof}

\subsection{Verification of the uniform inf-sup conditions for $(V_j,\tilde{V}_j)_{j\geq 0}$} \label{Sinfsup}
As a first application of the local-to-global basis construction discussed in the previous subsection, we will verify the existence of the uniform bounded biorthogonal projectors from \eqref{stab}.

Setting $\bm{I}_0:=\N_0^3 \cap \bm{T}$, and for $i \geq 1$, $\bm{I}_i:=(2^{-i} \N_0^3 \cap \bm{T}) \setminus \bm{I}_{i-1}$, we let
$\bm{N}:=\{n_\lambda\colon \lambda \in \bm{I}_0 \cup \bm{I}_1\}$ and  $\bm{\tilde{N}}:=\{\tilde{n}_\lambda\colon \lambda \in \bm{I}_0 \cup \bm{I}_1\}$ denote the standard nodal bases of $P_2(\bm{T})$ and $C(\bm{T}) \cap \prod_{i=1}^4 P_1(\bm{T}_i)$, respectively.

The with $\bm{N}$ and $\bm{\tilde{N}}$ corresponding  `global' collections $N_j$ and $\tilde{N}_j$, defined by \eqref{constrij}--\eqref{constrdef}, span the spaces $V_j$ and $\tilde{V}_j$, respectively. 
The index sets of these collections satisfy $I_{N_j}=I_{\tilde{N}_j}={\mathcal N}(\tria_{j+1})$. Lemma~\ref{lem1}\eqref{rel1} shows that $N_j$ and $\tilde{N}_j$ are uniform $L_2(\Omega)$-Riesz bases for their spans. 
With $\bm{I}_0 \cup \bm{I}_1$ (and $\bm{I}_2$) numbered as indicated in Figure~\ref{fig1}, 
\begin{figure}[h]
\begin{center}
\input{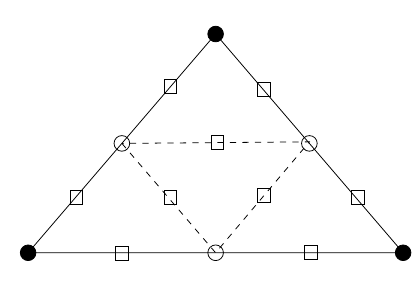_t}
\end{center}
\caption{Numbering of $\bm{I}_0 \cup \bm{I}_1 \cup \bm{I}_2$.}
\label{fig1}
\end{figure}
a direct computation shows that
$$
\frac{\langle \bm{N},\bm{\tilde{N}}\rangle_{L_2(\bm{T})}}{\vol(\bm{T})} = \frac{1}{480} \left[%
\begin{array}{@{}rrrrrr@{}}%
16 & -3 & -3 & -10 & 0 & 0\\
-3 & 16 & -3 & 0 & -10 & 0\\
-3 & -3 & 16 & 0 & 0 & -10\\
2 & 14 & 14 & 70 & 30 & 30\\
14 & 2 & 14 & 30 & 70 & 30\\
14 & 14 & 2 & 30 & 30 & 70
\end{array}\right].
$$
With $\lambda:=\lambda_{\min}\big(\frac{1}{2\vol(\bm{T})} (\langle \bm{N},\bm{\tilde{N}}\rangle_{L_2(\bm{T})}+\langle \bm{N},\bm{\tilde{N}}\rangle_{L_2(\bm{T})}^\top)\big)$, we have $\frac{\langle \bm{N},\bm{\tilde{N}}\rangle_{L_2(\bm{T})}}{\vol(\bm{T})} \geq \lambda \identity$ and thus, by Lemma~\ref{lem1}\eqref{rel2}, 
$\langle N_j,\tilde{N}_j\rangle_{L_2(\Omega)} \geq \lambda \identity$.
It is easily verified that $\lambda>0$, which shows that $\langle N_j,\tilde{N}_j\rangle_{L_2(\Omega)}$ is invertible with $\sup_j \|\langle N_j,\tilde{N}_j\rangle_{L_2(\Omega)}^{-1}\| \leq \lambda^{-1}<\infty$.
Now Remark~\ref{rem1} shows that the inf-sup condition \eqref{infsup}, and thus equivalently \eqref{stab}, are indeed satisfied.

\subsection{Local collections $\bm{\Theta}$, $\bm{\Xi}$, and $\bm{\tilde{\Phi}}$ underlying the construction of $\Psi_{j+1}$}
In order to construct $\Psi_{j+1}$ for $j \geq 0$ by means of equation \eqref{formula}, we need to specify the collections $\Theta_j$, $\Xi_{j+1}$ and $\tilde{\Phi}_j$ of functions on $\Omega$.
We will construct them from collections $\bm{\Theta}=\{\bm{\theta}_\lambda\colon \lambda \in \bm{I}_0 \cup \bm{I}_1\}$, $\bm{\Xi}=\{\bm{\xi}_\lambda\colon \lambda \in \bm{I}_2\}$, and $\bm{\tilde{\Phi}}=\{\bm{\tilde{\phi}}_\lambda \colon \lambda \in \bm{I}_0 \cup \bm{I}_1\}$, respectively, of functions on $\bm{T}$ that satisfy 
$(\EuScript{V})$, $(\EuScript{S})$, and $(\EuScript{I})$, and for which $\bm{\Theta} \cup \bm{\Sigma}$ and $\bm{\tilde{\Phi}}$ are bases for 
$C(\bm{T}) \cap \prod_{i=1}^4 P_2(\bm{T}_i)$ and $C(\bm{T}) \cap \prod_{i=1}^4 P_1(\bm{T}_i)$, respectively, and $\langle \bm{\Theta}, \bm{\tilde{\Phi}}\rangle_{L_2(\bm{T})}=\vol(\bm{T}) \identity$.
Then by an application of Lemma~\ref{lem1}, we know that, as required, $\Theta_j \cup \Sigma_{j+1}$ and $\tilde{\Phi}_j$ are uniform Riesz bases for $V_{j+1}$ and $\tilde{V}_j$, respectively, and $\langle \Theta_j,\tilde{\Phi}_j\rangle_{L_2(\Omega)}=\identity$.
The index sets of these collections satisfy $I_{\Theta_j}=I_{\tilde{\Phi}_j}={\mathcal N}(\tria_{j+1})$, and $I_{\Sigma_{j+1}}={\mathcal N}(\tria_{j+2}) \setminus {\mathcal N}(\tria_{j+1})$.

We will specify $\bm{\Theta} \cup \bm{\Sigma}$ and $\bm{\tilde{\Phi}}$ in terms of  the usual nodal bases $\bm{N}_f$ and $\bm{\tilde{N}}$ for $C(\bm{T}) \cap \prod_{i=1}^4 P_2(\bm{T}_i)$ and $C(\bm{T}) \cap \prod_{i=1}^4 P_1(\bm{T}_i)$, respectively. 
For completeness, with $\bm{N}_f$ we mean the collection of functions in $C(\bm{T}) \cap \prod_{i=1}^4 P_2(\bm{T}_i)$ that have value $1$ in one of the points of $\bm{I}_0 \cup \bm{I}_1 \cup \bm{I}_2$ and vanish at the others.
Aiming at wavelets that have relatively small supports, we exploit the available freedom in the construction to obtain $\bm{\theta}_\lambda$ and $\bm{\xi}_\lambda$ with small supports
and a matrix $\langle \bm{\Xi},\bm{\tilde{\Phi}}\rangle_{L_2(\bm{T})}$ that is sparsely populated. 

We define $\bm{\tilde{\Phi}}$ as indicated in Figure~\ref{fig2}, and $\bm{{\Theta}}$ as indicated in Figure~\ref{fig3}. Both these collections satisfy $(\EuScript{V})$, $(\EuScript{S})$, and $(\EuScript{I})$, and $\bm{\tilde{\Phi}}$ is a basis for $C(\bm{T}) \cap \prod_{i=1}^4 P_1(\bm{T}_i)$.
\begin{figure}[h]
\begin{center}
\input{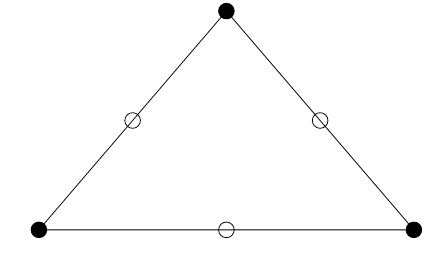_t}\qquad
\input{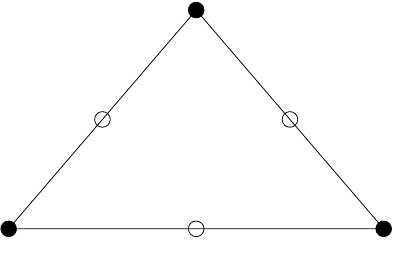_t}
\end{center}
\caption{$\bm{\tilde{\phi}}_{(1,0,0)}$ (left) and $\bm{\tilde{\phi}}_{(\frac12,\frac12,0)}$ (right) in terms of $\bm{\tilde{N}}$. The other $\bm{\tilde{\phi}}_\lambda$ are obtained by permuting the barycentric coordinates.}
\label{fig2}
\end{figure}
\begin{figure}[h]
\begin{center}
\input{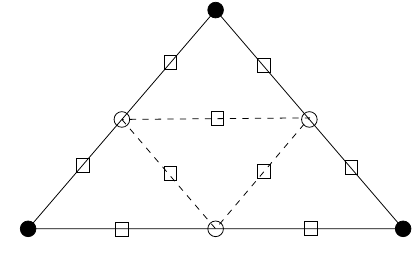_t} \qquad
\input{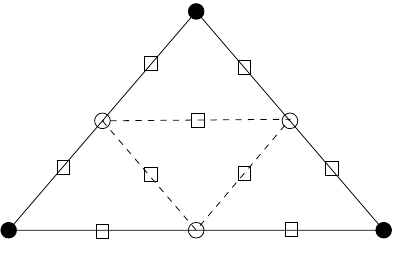_t}
\end{center}
\caption{$\bm{{\theta}}_{(1,0,0)}$ (left) and $\bm{{\theta}}_{(\frac12,\frac12,0)}$ (right) in terms of $\bm{N}_f$. The other $\bm{\theta}_\lambda$ are obtained by permuting the barycentric coordinates.}
\label{fig3}
\end{figure}
A direct computation shows that $\langle \bm{\Theta}, \bm{\tilde{\Phi}}\rangle_{L_2(\bm{T})}=\vol(\bm{T}) \identity$.
We define $\bm{{\Xi}}$ as indicated in Figure~\ref{fig4}. It satisfies $(\EuScript{V})$, $(\EuScript{S})$, and $(\EuScript{I})$,
\begin{figure}[h]
\begin{center}
\raisebox{-2ex}[0pt][0pt]{\input{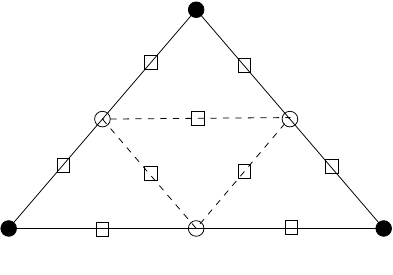_t}} \qquad
\input{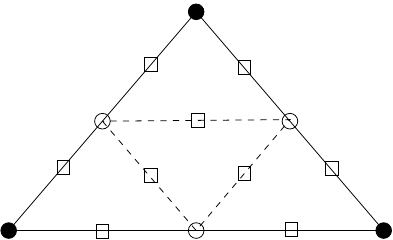_t}
\end{center}
\caption{$\bm{\xi}_{(\frac34,\frac14,0)}$ (left) and $\bm{\xi}_{(\frac14,\frac14,\frac12)}$ (right) in terms of $\bm{N}_f$. The other $\bm{\xi}_\lambda$'s  (five and two) are obtained by permuting the barycentric coordinates.}
\label{fig4}
\end{figure}
and since
\newcommand{\aaa}{\frac{1560}{81}}
\newcommand{\bb}{\frac{530}{81}}
\newcommand{\cc}{\frac{50}{81}}
\newcommand{\qq}{\frac{-12}{25}}
\newcommand{\sss}{\frac{-2}{25}}
\newcommand{\ttt}{\frac{-8}{25}}
\newcommand{\uu}{\frac{1}{25}}
$$ 
\arraycolsep=1.4pt
\def\arraystretch{1.1}
\small
\left|
\begin{array}{@{}ccccccccccccccc@{}} 
72 & 0 & 0 & 0 & 0 & 0 & 0 & 0 & 0 & 0 & 0 & 0 & 0 & 0 & 0  \\
0 & 72 & 0 & 0 & 0 & 0 & 0 & 0 & 0 & 0 & 0 & 0 & 0 & 0 & 0  \\
0 & 0 & 72 & 0 & 0 & 0 & 0 & 0 & 0 & 0 & 0 & 0 & 0 & 0 & 0  \\
0 & 0 & 0 & \aaa & 0 & 0 & \qq & \qq & 0 & 0 & 0 & 0 & 0 & 0 & 0  \\
0 & 0 & 0 & 0 & \aaa & 0 & 0 & 0 & \qq & \qq & 0 & 0 & 0 & 0 & 0  \\
0 & 0 & 0 & 0 & 0 & \aaa & 0 & 0 & 0 & 0 & \qq & \qq & 0 & 0 & 0  \\
0 & 6 & 0 & \bb & 0 & 0 & 1 & \sss & 0 & 0 & 0 & 0 & 0 & 0 & 0  \\
0 & 0 & 6 & \bb & 0 & 0 & \sss & 1 & 0 & 0 & 0 & 0 & 0 & 0 & 0  \\
0 & 0 & 6 & 0 & \bb & 0 & 0 & 0 & 1 & \sss & 0 & 0 & 0 & 0 & 0  \\
6 & 0 & 0 & 0 & \bb & 0 & 0 & 0 & \sss & 1 & 0 & 0 & 0 & 0 & 0  \\
6 & 0 & 0 & 0 & 0 & \bb & 0 & 0 & 0 & 0 & 1 & \sss & 0 & 0 & 0  \\
0 & 6 & 0 & 0 & 0 & \bb & 0 & 0 & 0 & 0 & \sss & 1 & 0 & 0 & 0  \\
0 & 0 & 0 & 0 & \cc & \cc & 0 & 0 & \uu & \ttt & \ttt & \uu & \frac{-5}{4} & 1 & 1  \\
0 & 0 & 0 & \cc & 0 & \cc & \ttt & \uu & 0 & 0 & \uu & \ttt & 1 & \frac{-5}{4}  & 1  \\
0 & 0 & 0 & \cc & \cc & 0 & \uu & \ttt & \ttt & \uu & 0 & 0 & 1 & 1 & \frac{-5}{4} 
\end{array}
\right|
\neq 0,
$$
we conclude that $\bm{\Theta} \cup \bm{\Xi}$ is a basis for $C(\bm{T}) \cap \prod_{i=1}^4 P_2(\bm{T}_i)$.
It holds that
\begin{align*}
 \langle \bm{\xi}_{(\frac34,\frac14,0)},\bm{\tilde{\phi}}_\lambda\rangle_{L_2(\bm{T})}& =\vol(\bm{T}) \times \left\{
\def\arraystretch{1.2}
\begin{array}{cl} 
\frac{3}{100} & \lambda=(1,0,0), \\
0 & \lambda \in (\bm{I}_0 \cup \bm{I}_1)\setminus \{(1,0,0)\},
\end{array}
\right.\\
\langle \bm{\xi}_{(\frac14,\frac14,\frac12)},\bm{\tilde{\phi}}_\lambda\rangle_{L_2(\bm{T})}& =\vol(\bm{T}) \times \left\{
\def\arraystretch{1.2}
\begin{array}{cl} 
\frac{-1}{48} & \lambda=(0,0,1),\\
\frac{27}{240} & \lambda=(\frac12,\frac12,0), \\
0 & \lambda \in (\bm{I}_0 \cup \bm{I}_1)\setminus\{(0,0,1), (\frac12,\frac12,0)\},
\end{array}
\right.
\end{align*}
where the other values of $\langle \bm{\xi}_{\mu},\bm{\tilde{\phi}}_\lambda\rangle_{L_2(\bm{T})}$ for $(\mu,\lambda) \in \bm{I}_2 \times (\bm{I}_1 \cup \bm{I}_0)$ are obtained by permuting the barycentric coordinates.

\subsection{Definition of the $\Psi_j$}
We take $\Psi_0=N_0$, being a Riesz basis for $V_0$.
In the previous subsection, from corresponding local collections we have constructed uniform Riesz bases
$\Theta_j \cup \Sigma_{j+1}$ and $\tilde{\Phi}_j$ for $V_{j+1}$ and $\tilde{V}_j$, respectively, such that $\langle \Theta_j,\tilde{\Phi}_j \rangle_{L_2(\Omega)}=\identity$.
For $j \geq 0$, the collection of wavelets $\Psi_{j+1}$ is now given by the explicit formula $\Psi_{j+1}=\Xi_{j+1}-\langle \Xi_{j+1},\tilde{\Phi}_j\rangle_{L_2(\Omega)} \Theta_j$.
These wavelets will depend on the topology of $\tria_0$ via the the local-to-global  basis construction \eqref{constrij}--\eqref{constrdef} that we applied for the definition of $\Theta_j $, $\Sigma_{j+1}$, and $\tilde{\Phi}_j$,  as well via the inner product $\langle \Xi_{j+1},\tilde{\Phi}_j\rangle_{L_2(\Omega)}$. 

Despite of these dependencies on $\tria_0$, as well as that on $\Gamma$, we can distinguish between two types of wavelets:
A wavelet $\psi_{j+1,x}$ of the first type stems from $\bm{\xi}_\lambda$ of type $\bm{\xi}_{(\frac{3}{4},\frac{1}{4},0)}$ (see Figure~\ref{fig4}), so that $x \in {\mathcal N}(\tria_{j+2}) \setminus {\mathcal N}(\tria_{j+1})$ is on an edge of a $T \in \tria_j$.
It equals $\xi_{j+1,x}$ minus a multiple of \emph{one} (or, near the Dirichlet boundary,  possibly zero) $\theta_{j,y}$ with $y \in {\mathcal N}(\tria_j)$ (left picture in Figure~\ref{fig2}).
A wavelet $\psi_{j+1,x}$ of the second type stems from $\bm{\xi}_\lambda$ of type $\bm{\xi}_{(\frac{1}{4},\frac{1}{4},\frac{3}{4})}$, so that $x \in {\mathcal N}(\tria_{j+2}) \setminus {\mathcal N}(\tria_{j+1})$ is interior to a $T \in \tria_j$.
It equals $\xi_{j+1,x}$ minus a multiple of \emph{two} (or, near the Dirichlet boundary,  possibly one or zero) $\theta_{j,y}$'s, where one $y \in {\mathcal N}(\tria_j)$ and the other is in ${\mathcal N}(\tria_{j+1}) \setminus {\mathcal N}(\tria_j)$ (right picture in Figure~\ref{fig2}).
The supports of both types of wavelets (away from the Dirichlet boundary) are illustrated in Figure~\ref{fig5}.
\begin{figure}[h]
\begin{center}
\input{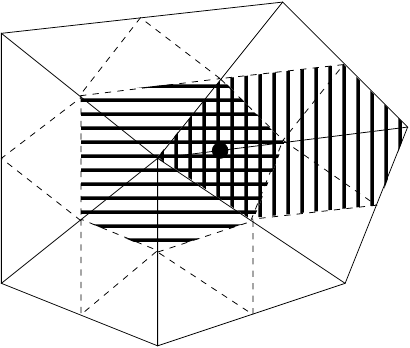_t}\qquad
\input{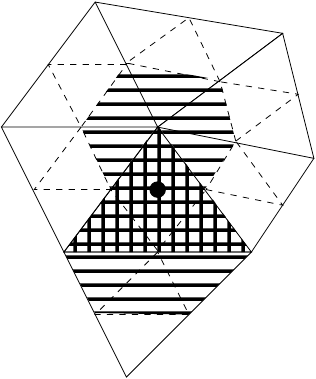_t}
\end{center}
\caption{Wavelets $\psi_{j+1,x}$ for $x \in {\mathcal N}(\tria_{j+2}) \setminus {\mathcal N}(\tria_{j+1})$ on an edge of  a $T \in \tria_j$ (left) or interior to a $T \in \tria_j$ (right).
 Indicated are $x$ ($\bullet$), the support of $\xi_{j+1,x}$ (vertical shading), and that of the $\theta_{j,y}$'s (horizontal shading).
 The support of the wavelet is the union of these supports.
 \newline The wavelets are linear combinations of 11 (left) or 13 (right) quadratic nodal basis functions (i.e. functions from $N_{j+1}$) (the numbers 11 and 13 apply when the valence of each interior vertex in $\tria_j$ is 6).}
\label{fig5}
\end{figure}

\subsection{Condition numbers}
For $\Omega =(0,1)^2$, $\tria_0=\big\{\{(x,y)\colon 0 \leq x \leq y \leq 1\}$, $\{(x,y)\colon 0 \leq y \leq x \leq 1\}\big\}$, and $\Gamma=\partial\Omega$, we have computed $\kappa_{L_2(\Omega)}(\bigcup_{j=0}^J \Psi_j)$ and $\kappa_{H^1_0(\Omega)}(\bigcup_{j=0}^J \Psi_j)$, with $H^1_0(\Omega)$ equipped with $|\cdot|_{H^1(\Omega)}$, and with the wavelets normalized in the corresponding norm.
That is, we have computed the condition numbers of the (normalized) mass and stiffness matrices. The results are given in Tables~\ref{tab1} and \ref{tab2}.

\begin{table}
\caption{$L_2$-condition numbers of the $L_2$-normalized wavelets up to level $J$.}
\begin{tabular}{|c|c|c|c|c|c|c|c|c|c|}
\hline
$J$ & 0 & 1 & 2 & 3 & 4 & 5 & 6 & 7 & 8\\ \hline
$\kappa_{L_2((0,1)^2)}$ & 1 & 4.8 & 7.3 & 8.3 & 8.9 & 9.2 & 9.7 & 9.8 & 9.9\\
\hline
\end{tabular}
\label{tab1}
\end{table}

\begin{table}
\caption{$H_0^1$-condition numbers of the $H_0^1$-normalized wavelets up to level $J$.}
\begin{tabular}{|c|c|c|c|c|c|c|c|c|c|}
\hline
$J$ & 0 & 1 & 2 & 3 & 4 & 5 & 6 & 7 & 8\\ \hline
$\kappa_{H^1_0((0,1)^2)}$ & 1 & 27 & 41 & 54 & 63 & 70 & 76 & 81 & 85\\
\hline
\end{tabular}
\label{tab2}
\end{table}

\begin{remark} The principles behind the finite element wavelet construction applied in this work were introduced in \cite{249.71, 56}. 
The realizations of finite element wavelets for given primal and dual orders from \cite{56} were revisited in \cite{239.17} in order to obtain smaller condition numbers.
The two-dimensional piecewise quadratic wavelets from \cite{239.17} have 3 vanishing moments (since $\tilde{V}_j=V_j$). Their $L_2$- and $H_0^1$-condition numbers on the square on level $J=6$ were $12$ and $60$, respectively, for the $H_0^1$-case somewhat improving upon the condition numbers from Table~\ref{tab2}.
On the other hand, for a regular mesh where all vertices have valence 6, each quadratic wavelet away from the boundary in \cite{239.17} is a linear combination of not less than 87 nodal basis functions, which high number was the motivation for the current work.

Two-dimensional quadratic finite element wavelets where each wavelet is a linear combination of only 4 or 6 nodal basis functions were introduced in \cite{187}.
These wavelets are based on a splitting of $V_{j+1}$ into $V_j$ and an orthogonal complement w.r.t. a `discrete' level-dependent scalar product on $V_{j+1}$.
It was shown that the wavelets generate a Riesz basis for $H^s$ for $s \in (0.3974,\frac{3}{2})$.
These wavelets, however, have no vanishing moment, and, consequently, they cannot be expected to generate a Riesz basis for $H^s$ for $s \leq 0$.
\end{remark}

Finally, since for our application in solving parabolic PDEs it is needed that the wavelets form a Riesz basis in $H^{-1}(\Omega)$, based on $\kappa_{H^{-1}(\Omega)}({\bf D} \Psi)=\kappa_{H^1_0(\Omega)}({\bf D}^{-1} \tilde{\Psi})$ (see Remark~\ref{duals}), where $\tilde{\Psi}$ is the dual wavelet basis and ${\bf D} \eqsim \mathrm{blockdiag}[2^{-j}]_{j \geq 0} \eqsim \mathrm{diag}[|\tilde{\psi}|_{H^1(\Omega)}]_{\tilde{\psi} \in \tilde{\Psi}}$, additionally we have computed $\kappa_{H^1_0(\Omega)}(\bigcup_{j=0}^J \tilde{\Psi}_j)$ with these dual wavelets being normalized w.r.t. $|\cdot|_{H^1(\Omega)}$. The results are given in Table~\ref{tab3}. Since, as explained in Remark~\ref{dualscontd}, these computations involve the applications of inverses of mass matrices and that of two-level transforms at the primal side, which inverses in our case are densely populated, we computed these condition numbers only up to level 6.

\begin{table}
\caption{$H^1$-condition numbers of the $H^1$-normalized \emph{dual} wavelets up to level $J$.}
\begin{tabular}{|c|c|c|c|c|c|c|c|}
\hline
$J$ & 0 & 1 & 2 & 3 & 4 & 5 & 6 \\ \hline
$\kappa_{H^1_0((0,1)^2)}$ & 1 &  6.5 & 14 & 22 & 28 & 32 & 36 \\
\hline
\end{tabular}
\label{tab3}
\end{table}

\bibliographystyle{alpha}
\bibliography{../ref}

\end{document}